\newtheorem{prop} {Proposition} 
\newtheorem{thm} [prop] {Théorème}
\theoremstyle{definition}
\newtheorem{df}{Définition} 
\newtheorem*{df*}{Définition}
\theoremstyle{remark}
\newtheorem{rmq}{Remarque} 
\newtheorem{example}{Exemple} 
\newtheorem{exm}[example]{Exemple}
\author{Stéphane \textsc{Dugowson}
\footnote{s.dugowson@gmail.com}
}
\title {Définition du topos d'un espace connectif}
\begin{document}

\maketitle

\paragraph{Résumé.} Cette courte note définit le topos de Grothendieck associé à un espace connectif.

\subparagraph{\emph{Mots clés.}} Topos de Grothendieck. Espaces connectifs.

\paragraph{Abstract.} \textsc{The Topos Of A Connectivity Space ---} In this short note, a topos --- called \emph{the topos of the connectivity space} --- is associated with every such space.

\subparagraph{\emph{Key words.}} Grothendieck Topos. Connectivity spaces. 

\paragraph{MSC 2010 :} 18B25, 54A05.\\

L'objet de cette note est de définir le \emph{topos associé à un espace connectif}. Pour les notations et les définitions, on se reportera à \cite{Dugowson:201012}. 
Soit $X=(X,\mathcal{K})$ un espace connectif, que nous ne supposons pas nécessairement intègre.

L'ensemble ordonné $(\mathcal{K},\subset)$ s'identifie à une catégorie que nous noterons encore $\mathcal{K}$. Bien entendu, on peut considérer le topos des préfaisceaux sur cet ensemble ordonné, mais ce n'est pas celui-là (en général) que nous appellerons le topos associé à $\mathcal{K}$. 
Pour tout connexe $K\in\mathcal{K}$, on note $\mathbf{c}(K)$ l'ensemble des cribles sur $K$. Tout crible sur $K$ sera identifié à l'ensemble de parties connexes de $K$ que sont les domaines des flèches du crible en question.

\begin{rmq} Pour tout connexe $A\in\mathcal{K}$, la structure connective induite sur $A$, que nous noterons $\mathcal{K}_{\vert A}$ et qui est donnée par
\[\mathcal{K}_{\vert A}=\mathcal{K} \cap \mathcal{P}A \]
constitue le crible maximal sur $A$ : \[c_{\max}(A)=\mathcal{K}_{\vert A}.\]
\end{rmq}

Pour tout $A\in\mathcal{K}$, on pose
\[
J(A)=\{
c\in \mathbf{c}(A), 
[c]_0 
=\mathcal{K}_{\vert A}
\},
\]
où $[c]_0$ désigne la structure connective\footnote{Non nécessairement intègre.} engendrée par l'ensemble $c\subset \mathcal{K}$. Anticipant l'annonce du théorème \ref{thm topo G sur K} ci-dessous, pour tout connexe  $A\in \mathcal{K}$, les éléments de $J(A)$ seront appelés les \emph{cribles couvrants $A$} (ou les cribles qui recouvrent $A$).

\begin{rmq}
Les points  ne couvrent qu'eux-mêmes : les connexes non réduits à un point ne sont jamais engendrés par les singletons seuls.
\end{rmq}

La proposition suivante, dans laquelle $\mathcal{I}_\mathcal{K}$ désigne l'ensemble des connexes irréductibles, découle immédiatement des définitions.

\begin{prop}
Une partie connexe $K\in\mathcal{K}$ de $X$ est irréductible si et seulement si $J(K)$ est un singleton, seul le crible maximal étant alors couvrant :
\[(K \in\mathcal{I}_\mathcal{K})\Leftrightarrow J(K)=\{c_{\max}(K)\}.\]
\end{prop}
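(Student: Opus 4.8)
Le plan est de ramener l'équivalence à la définition même de l'irréductibilité, en exploitant la structure de l'ensemble ordonné des cribles sur $K$.

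D'abord, je remarquerais que le crible maximal est toujours couvrant : puisque $\mathcal{K}_{\vert K}$ est déjà une structure connective sur $K$, on a $[c_{\max}(K)]_0 = [\mathcal{K}_{\vert K}]_0 = \mathcal{K}_{\vert K}$, donc $c_{\max}(K) \in J(K)$. Ainsi $J(K)$ est un singleton si et seulement s'il est égal à $\{c_{\max}(K)\}$, ce qui revient à dire qu'aucun crible \emph{propre} sur $K$ n'est couvrant.

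Ensuite, j'utiliserais la monotonie de l'opérateur de génération $c \mapsto [c]_0$, ainsi que le fait que les cribles propres sur $K$ admettent un plus grand élément, à savoir $c' = \mathcal{K}_{\vert K} \setminus \{K\}$ (c'est bien un crible, car ôter l'élément maximal d'un ensemble clos par le bas préserve la clôture par le bas). Tout crible propre $c$ vérifie $c \subseteq c'$, d'où $[c]_0 \subseteq [c']_0 \subseteq \mathcal{K}_{\vert K}$ ; il existe donc un crible propre couvrant si et seulement si $c'$ lui-même est couvrant, c'est-à-dire si et seulement si $[c']_0 = \mathcal{K}_{\vert K}$.

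Enfin, comme $c'$ contient déjà toutes les parties connexes de $K$ sauf $K$ lui-même, la condition $[c']_0 = \mathcal{K}_{\vert K}$ se réduit à la seule appartenance $K \in [\mathcal{K}_{\vert K} \setminus \{K\}]_0$, le seul élément de $\mathcal{K}_{\vert K}$ susceptible de manquer étant $K$. Par définition, ceci exprime exactement que $K$ est \emph{réductible}, c'est-à-dire engendré par ses parties connexes propres ; en niant, $J(K) = \{c_{\max}(K)\}$ équivaut à $K \notin [\mathcal{K}_{\vert K} \setminus \{K\}]_0$, donc à l'irréductibilité de $K$. Le point qui demande le plus de soin est cette dernière réduction --- vérifier qu'aucune partie connexe propre ne peut échouer à être régénérée, ce qui est immédiat puisqu'elles appartiennent toutes à $c' \subseteq [c']_0$ --- et le recollement avec la définition précise de $\mathcal{I}_\mathcal{K}$, la proposition étant au demeurant annoncée comme découlant immédiatement des définitions.
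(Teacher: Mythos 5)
Your proof is correct and follows essentially the route the paper intends: the author gives no argument, stating only that the proposition \og découle immédiatement des définitions \fg, and your write-up is precisely that immediate unwinding. The organizing device you add --- reducing to the single maximal proper sieve $c'=\mathcal{K}_{\vert K}\setminus\{K\}$ via the monotonicity of $[\,\cdot\,]_0$, so that the existence of a proper covering sieve is equivalent to $K\in[\mathcal{K}_{\vert K}\setminus\{K\}]_0$, i.e.\ to the reducibility of $K$ --- is exactly the right way to make the link with the definition of $\mathcal{I}_\mathcal{K}$ explicit.
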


\begin{rmq} Pour tout connexe $K\subset X$, il y a deux cribles distincts d'union la partie vide --- toujours connexe --- de $K$  : le crible vide $\emptyset$  et le crible $\{\emptyset\}$ dont le seul élément est l'injection canonique $\emptyset \hookrightarrow K$. Si $K=\emptyset$, ces deux cribles couvrent $\emptyset$ : ainsi,  $J(\emptyset)$ a deux éléments --- le crible maximal et le crible vide --- et la partie vide $\emptyset\subset X$ n'est donc pas irréductible.
\end{rmq}

\begin{thm} \label{thm topo G sur K}
$J$ constitue une topologie de Grothendieck sur la catégorie
$\mathcal{K}$. 
\end{thm}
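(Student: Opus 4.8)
Pour établir que $J$ est une topologie de Grothendieck sur $\mathcal{K}$, je vérifierais les trois axiomes usuels, réécrits dans le cadre de l'ensemble ordonné $(\mathcal{K},\subset)$. Un crible sur $A\in\mathcal{K}$ s'identifie à une partie de $\mathcal{K}_{\vert A}$ stable par passage aux sous-connexes, le crible maximal étant $c_{\max}(A)=\mathcal{K}_{\vert A}$, et l'image réciproque d'un crible $c$ sur $A$ le long d'une inclusion $B\subset A$ (avec $B\in\mathcal{K}$) étant le crible $c_{\vert B}=c\cap\mathcal{P}B=\{C\in c : C\subset B\}$ sur $B$. Les trois conditions à établir sont alors : (i) $c_{\max}(A)\in J(A)$ ; (ii) \emph{stabilité} : si $c\in J(A)$ et $B\subset A$, alors $c_{\vert B}\in J(B)$ ; (iii) \emph{transitivité} : si $c\in J(A)$ et si $d$ est un crible sur $A$ tel que $d_{\vert B}\in J(B)$ pour tout $B\in c$, alors $d\in J(A)$.

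L'outil central sera une description explicite de la structure engendrée $[c]_0$. Je démontrerais le lemme suivant : pour $c\subset\mathcal{K}_{\vert A}$, un connexe $K\subset A$ appartient à $[c]_0$ si et seulement si $K=\emptyset$, ou bien il existe une sous-famille $c'\subset c$ telle que $\bigcup c'=K$ et dont le \emph{graphe d'intersection} --- sommets les éléments de $c'$, arêtes les paires d'éléments qui se rencontrent --- soit connexe. Pour cela, je vérifierais que l'ensemble $\mathcal{G}$ des parties ainsi décrites contient $c$ et $\emptyset$ et est stable par réunion de familles ayant un point commun (un tel point raccorde les graphes d'intersection des membres), donc est une structure connective contenant $c$ ; comme tout élément de $\mathcal{G}$ appartient à toute structure connective contenant $c$ (en raccordant de proche en proche le long du graphe d'intersection), on obtient $\mathcal{G}=[c]_0$.

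Les points (i) et (iii) seront alors formels. Pour (i), on a $[c_{\max}(A)]_0=[\mathcal{K}_{\vert A}]_0=\mathcal{K}_{\vert A}$, puisque $\mathcal{K}_{\vert A}$ est déjà une structure connective ; donc $c_{\max}(A)\in J(A)$. Pour (iii), j'utiliserais que $c\mapsto[c]_0$ est un opérateur de clôture (croissant, idempotent, extensif) : pour chaque $B\in c$, l'hypothèse $d_{\vert B}\in J(B)$ donne $B\in\mathcal{K}_{\vert B}=[d_{\vert B}]_0\subset[d]_0$, d'où $c\subset[d]_0$, puis $\mathcal{K}_{\vert A}=[c]_0\subset[[d]_0]_0=[d]_0$ ; comme par ailleurs $d\subset\mathcal{K}_{\vert A}$ entraîne $[d]_0\subset\mathcal{K}_{\vert A}$, on conclut $[d]_0=\mathcal{K}_{\vert A}$, c'est-à-dire $d\in J(A)$.

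La difficulté principale résidera dans la stabilité (ii), qui repose sur le caractère local de la génération fourni par le lemme. Soit $c\in J(A)$, c'est-à-dire $[c]_0=\mathcal{K}_{\vert A}$, et soit $B\subset A$ connexe. L'inclusion $[c_{\vert B}]_0\subset\mathcal{K}_{\vert B}$ est immédiate puisque $c_{\vert B}\subset\mathcal{K}_{\vert B}$. Réciproquement, pour $K\in\mathcal{K}_{\vert B}$ non vide, on a $K\in\mathcal{K}_{\vert A}=[c]_0$, donc par le lemme $K=\bigcup c'$ pour une sous-famille $c'\subset c$ à graphe d'intersection connexe ; chaque élément de $c'$ étant contenu dans $\bigcup c'=K\subset B$, on a $c'\subset c_{\vert B}$, d'où $K\in[c_{\vert B}]_0$ par le lemme appliqué à $c_{\vert B}$. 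Ainsi $[c_{\vert B}]_0=\mathcal{K}_{\vert B}$, soit $c_{\vert B}\in J(B)$. Le cas $B=\emptyset$ reste compatible avec la remarque précédente ($J(\emptyset)=\{\emptyset,\{\emptyset\}\}$). Le point essentiel est donc l'observation suivante : un connexe engendré par $c$ et contenu dans $B$ est déjà engendré par la trace de $c$ sur $B$.
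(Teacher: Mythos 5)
Votre démonstration est correcte et repose sur le même découpage que celle de l'article : vérification des trois axiomes, le crible maximal étant trivialement couvrant, la transitivité découlant du fait que $c\mapsto[c]_0$ est un opérateur de clôture (c'est mot pour mot le troisième point de l'article), et la stabilité se ramenant à l'observation qu'un connexe engendré par $c$ et contenu dans $B$ est déjà engendré par $c\cap\mathcal{P}B$. La seule différence est de degré de détail : l'article expédie la stabilité en une phrase, en affirmant que $\mathcal{K}_{\vert B}$ est engendré par des éléments du crible nécessairement inclus dans $B$, tandis que vous dégagez le lemme qui fonde précisément cette affirmation, à savoir la description de $[c]_0$ comme ensemble formé de la partie vide et des réunions $\bigcup c'$ de sous-familles $c'\subset c$ à graphe d'intersection connexe. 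Ce lemme est le seul contenu non formel de la preuve et votre rédaction a le mérite de l'expliciter ; veillez seulement, en le démontrant, à ce que l'inclusion de votre ensemble $\mathcal{G}$ dans toute structure connective contenant $c$ vaille aussi pour les sous-familles infinies --- on y parvient en recollant d'abord, pour chaque sommet de $c'$, un chemin fini le reliant à un sommet de base fixé, puis en prenant la réunion de la famille des unions partielles ainsi obtenues, lesquelles contiennent toutes ce sommet de base.
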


\begin{proof} En effet,
\begin{itemize}
\item Pour tout  $A\in\mathcal{K}$, le crible maximal $c_{\max}(A)$ recouvre trivialement $A$,
\item La restriction à un connexe $B\subset A$ d'un crible couvrant $A$  est  couvrante pour $B$, puisque $\mathcal{K}_B$ est engendré par des éléments du crible, qui sont eux-mêmes nécessairement inclus dans $B$,
\item Toute couverture de chacun des connexes d'une famille couvrant $K$  détermine une famille couvrante de $K$ puisque la structure connective engendrée par celle-ci, contenant nécessairement ceux qui appartiennent à celle-là, contient également la structure que ces derniers engendrent.
\end{itemize}
\end{proof}

\begin{df}
Le \emph{site de $\mathcal{K}$} est le site $(\mathcal{K},J)$.  
Le topos $\mathcal{T}_{(X,\mathcal{K})}=\mathcal{T}_\mathcal{K}$ 
associé à un espace connectif 
$(X,\mathcal{K})$ est le topos des faisceaux d'ensembles sur le site
 $(\mathcal{K},J_\mathcal{K})$.
\end{df}

\begin{exm} On prend sur $X=\{a,b,c,d,e\}$ la structure connective \[\mathcal{K}=\{\emptyset,\{a\}, \{b\},\{c\},\{d\},\{e\}, \{a,b\}, \{b,c,d\}, \{a,b,c,d\}, X\}.\] Tous les $J(K)$ sont réduits au crible maximal sur $K$, à l'exception du vide $\emptyset$ et de $\{a,b,c,d\}$ qui est déjà couvert par le crible $\{\emptyset,\{a\}, \{b\},\{c\},\{d\}, \{a,b\}, \{b,c,d\}\}.$
\end{exm}

\begin{exm} Soit $\mathcal{K}_\mathbf{R}$ la structure connective usuelle sur $\mathbf{R}$, c'est-à-dire l'ensemble des intervalles. Pour tout intervalle $I$ et tout $\epsilon> 0$,  l'ensemble des sous-intervalles de $I$ de longueur $<\epsilon$ est un crible couvrant. 
\end{exm}

\paragraph{Conclusion}
Naturellement, parmi les nombreuses questions qui se posent d'emblée, se pose en particulier celle de savoir s'il est possible d'associer un morphisme géométrique à un morphisme connectif.

\bibliographystyle{plain}




\begin{thebibliography}{}

\end{thebibliography}


\begin{thebibliography}{1}

\bibitem{Dugowson:201012}
Stéphane Dugowson.
\newblock On connectivity spaces.
\newblock {\em Cahiers de {T}opologie et {G}éométrie {D}ifférentielle
  {C}atégoriques}, LI(4):282--315, 2010.
\newblock \url{http://hal.archives-ouvertes.fr/hal-00446998/fr}.
\end{thebibliography}

\end{document}